\theoremstyle{plain}
\numberwithin{equation}{section}
\newtheorem{thm}{Theorem}[section]
\newtheorem{lem}[thm]{Lemma}
\newtheorem{cor}[thm]{Corollary}
\newcommand{\rmtr}{\mathrm{tr\,}}
\newcommand{\rmhs}{\mathrm{HS}}
\newcommand{\ascript}{\mathcal{A}}
\newcommand{\bscript}{\mathcal{B}}
\newcommand{\lscript}{\mathcal{L}}
\newcommand{\ab}[1]{\left|#1\right|}
\newcommand{\doubleab}[1]{\left|\left|#1\right|\right|}
\newcommand{\brac}[1]{\left\{#1\right\}}
\newcommand{\paren}[1]{\left(#1\right)}
\newcommand{\sqbrac}[1]{\left[#1\right]}
\newcommand{\elbows}[1]{{\left\langle#1\right\rangle}}
\newcommand{\ket}[1]{{\left|#1\right>}}
\newcommand{\bra}[1]{{\left<#1\right|}}
\begin{document}

\title{OPERATOR ISOMORPHISMS ON\\HILBERT SPACE TENSOR PRODUCTS}
\author{Stan Gudder\\ Department of Mathematics\\
University of Denver\\ Denver, Colorado 80208\\
sgudder@du.edu}
\date{}
\maketitle

\begin{abstract}
This article presents an isomorphism between two operator algebras $L_1$ and $L_2$ where $L_1$ is the set of operators on a space of
Hilbert-Schmidt operators and $L_2$ is the set of operators on a tensor product space. We next compare our isomorphism to a well-known result called Choi's isomorphism theorem. The advantage of Choi's isomorphism is that it takes completely positive maps to positive operators. One advantage of our isomorphism is that it applies to infinite dimensional Hilbert spaces, while Choi's isomorphism only holds for finite dimensions. Also, our isomorphism preserves operator products while Choi's does not. We close with a brief discussion on some uses of our isomorphism.
\end{abstract}

\section{Operator Isomorphisms}  
For separable complex Hilbert spaces $H_1$, $H_2$, we denote the set of bounded linear operators from $H_1$ to $H_2$ by
$\lscript (H_1,H_2)$. If $H_1=H_2$ we write $\lscript (H)=\lscript (H,H)$. If $A\in\lscript (H_1,H_2)$, we define $A^*\in\lscript (H_2,H_1)$ as the unique operator satisfying $\elbows{A\phi ,\psi}=\elbows{\phi ,A^*\psi}$ for all $\phi\in H_1$, $\psi\in H_2$. We say that $A\in\lscript (H_1,H_2)$ is a \textit{Hilbert-Schmidt} (HS) \textit{operator} if $\rmtr (A^*A)<\infty$. Notice that $A^*A\in\lscript (H_1)$. Since $\rmtr (A^*A)=\rmtr (AA^*)$, it follows that if $A$ is HS, then $A^*$ is HS. We denote the set of HS operators from $H_1$ to $H_2$ by $\lscript _{\rmhs}(H_1,H_2)$. Clearly
$\lscript _{\rmhs}(H_1,H_2)$ is a complex linear space and it is well-known that $AB\in\lscript _{\rmhs}(H_1,H_2)$ for all $A\in\lscript (H_2)$,
$B\in\lscript _{\rmhs}(H_1,H_2)$. If we define the inner product $\elbows{A,B}_{\rmhs}=\rmtr (A^*B)$, then it is known that
$\lscript _{\rmhs}(H_1,H_2)$ becomes a Hilbert space. The following theorem concerning $\lscript _{\rmhs}(H_1,H_2)$ is well-known \cite{nc00}.

\begin{thm}    
\label{thm11}
Let $\brac{\phi _i}$ and $\brac{\psi _i}$ be orthonormal bases for $H_1$ and $H_2$, respectively.
{\rm{(a)}}\enspace $\brac{\ket{\psi _i}\bra{\phi _j}}$ is an orthonormal basis for $\lscript _{\rmhs}(H_1,H_2)$.
{\rm{(b)}}\enspace If $A\in\lscript (H_1,H_2)$, then the following statements are equivalent:\newline
{\rm{(i)}}\enspace $A\in\lscript _{\rmhs}(H_1,H_2)$,
{\rm{(ii)}}\enspace $\sum\ab{\elbows{\psi _i,A\phi _j}}^2<\infty$,
{\rm{(iii)}}\enspace $A=\sum\elbows{\psi _i,A\phi _j}\ket{\psi _i}\bra{\phi _j}$.
\end{thm}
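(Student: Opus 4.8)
The crux of the whole statement is a single computation with the rank-one operators $\ket{\psi_i}\bra{\phi_j}$. The plan is first to record that for any $A\in\lscript_{\rmhs}(H_1,H_2)$,
\[
\elbows{\ket{\psi_i}\bra{\phi_j},A}_{\rmhs}=\rmtr\paren{\ket{\phi_j}\bra{\psi_i}A}=\elbows{\psi_i,A\phi_j},
\]
obtained from $\paren{\ket{\psi_i}\bra{\phi_j}}^*=\ket{\phi_j}\bra{\psi_i}$ and then evaluating the trace in the basis $\brac{\phi_j}$ of $H_1$. Specializing $A=\ket{\psi_k}\bra{\phi_l}$ gives $\elbows{\ket{\psi_i}\bra{\phi_j},\ket{\psi_k}\bra{\phi_l}}_{\rmhs}=\delta_{ik}\delta_{jl}$, so the family $\brac{\ket{\psi_i}\bra{\phi_j}}$ is orthonormal. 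This is the one place where the pairing $\elbows{A\phi,\psi}=\elbows{\phi,A^*\psi}$ and the bra-ket conventions must be tracked carefully; everything afterward is abstract Hilbert-space bookkeeping.

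For part (a) it then remains to prove totality. I would take $B\in\lscript_{\rmhs}(H_1,H_2)$ orthogonal to every $\ket{\psi_i}\bra{\phi_j}$; by the displayed identity this forces $\elbows{\psi_i,B\phi_j}=0$ for all $i,j$. Since $\brac{\psi_i}$ is an orthonormal basis of $H_2$, this yields $B\phi_j=0$ for every $j$, and since $\brac{\phi_j}$ is an orthonormal basis of $H_1$ we get $B=0$. Thus the orthonormal family has trivial orthogonal complement and is a basis.

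For part (b) the governing identity is
\[
\rmtr\paren{A^*A}=\sum_j\elbows{A\phi_j,A\phi_j}=\sum_j\doubleab{A\phi_j}^2=\sum_{i,j}\ab{\elbows{\psi_i,A\phi_j}}^2,
\]
the last step being Parseval in $H_2$. This holds as an equality in $[0,\infty]$ for every $A\in\lscript(H_1,H_2)$ because $A^*A$ is positive and its trace is independent of the orthonormal basis, so (i)$\Leftrightarrow$(ii) is immediate. For (ii)$\Rightarrow$(iii) I would invoke Riesz-Fischer: square-summability of the coefficients $c_{ij}=\elbows{\psi_i,A\phi_j}$ makes $\sum c_{ij}\ket{\psi_i}\bra{\phi_j}$ converge in HS norm to some $B\in\lscript_{\rmhs}(H_1,H_2)$, and the first identity gives $\elbows{\psi_i,B\phi_j}=c_{ij}=\elbows{\psi_i,A\phi_j}$, whence $A=B$ by the argument of (a). Finally (iii)$\Rightarrow$(i) is automatic, since the right side of (iii) is an HS-norm convergent series of finite-rank operators and $\lscript_{\rmhs}(H_1,H_2)$ is complete.

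The only genuine obstacle is the first paragraph: computing the adjoint and trace of the rank-one operators exactly right under the paper's pairing, and confirming that $\rmtr\paren{A^*A}$ may be computed in any orthonormal basis, so that both (i)$\Leftrightarrow$(ii) and the Parseval step are legitimate even when the sum diverges. Once $\elbows{\ket{\psi_i}\bra{\phi_j},A}_{\rmhs}=\elbows{\psi_i,A\phi_j}$ is established, parts (a) and (b) reduce to the standard characterization of an orthonormal basis together with the Riesz-Fischer theorem.
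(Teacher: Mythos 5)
Your proof is correct, but there is nothing in the paper to compare it against: the paper states Theorem~\ref{thm11} as a well-known result and simply cites the literature \cite{nc00}, offering no proof of its own. Your argument is the standard self-contained one, and it checks out under the paper's conventions. The key identity $\elbows{\ket{\psi _i}\bra{\phi _j},A}_{\rmhs}=\rmtr\paren{\ket{\phi _j}\bra{\psi _i}A}=\elbows{\psi _i,A\phi _j}$ is computed correctly (the product $\ket{\phi _j}\bra{\psi _i}A$ is rank one, so its trace is unambiguous), and it cleanly drives both (a) and the uniqueness step in (b). The totality argument is sound: vanishing of all matrix elements of a bounded operator against two orthonormal bases forces the operator to vanish, and this needs only boundedness, not the HS property, which is exactly what you need when you identify $A$ with the Riesz--Fischer limit $B$. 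The equality $\rmtr (A^*A)=\sum _{i,j}\ab{\elbows{\psi _i,A\phi _j}}^2$ as an identity in $[0,\infty ]$, resting on basis-independence of the trace of a positive operator, gives (i)$\Leftrightarrow$(ii) even in the divergent case, which is the point many sketches gloss over. Your appeals to completeness of $\lscript _{\rmhs}(H_1,H_2)$ are legitimate here, since the paper asserts just before the theorem that this space is a Hilbert space. The only item worth making explicit is the sense of convergence in (iii) --- your argument establishes HS-norm convergence, which is the natural reading given the Hilbert-space structure and is what the rest of the paper (e.g.\ equation \eqref{eq11} and Theorem~\ref{thm12}) relies on.
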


The main concern of this article is a $C^*$-algebra isomorphism between $\lscript\paren{\lscript _{\rmhs}(H_1,H_2)}$ and
$\lscript (H_1\otimes H_2)$. We first define a Hilbert space isomorphism from $\lscript _{\rmhs}(H_1,H_2)$ to $H_1\otimes H_2$. Let
$\brac{\phi _i}$ and $\brac{\psi _i}$ be orthonormal bases for $H_1$ and $H_2$, respectively, and let
$\ascript =\brac{\phi _i,\psi _j\colon i,j=1,2,\ldots}$. Define the map $J_\ascript\colon\lscript _{\rmhs}(H_1,H_2)\to H_1\otimes H_2$ given by
\begin{equation}                
\label{eq11}
J_\ascript (A)=\sum\elbows{\psi _i,A\phi _j}\phi _j\otimes\psi _i
\end{equation}
Notice that $J_\ascript\paren{\ket{\psi _i}\bra{\phi _j}}=\phi _j\otimes\psi _i$.

\begin{thm}    
\label{thm12}
The map $J_\ascript$ is a Hilbert space isomorphism, $J_\ascript ^*\colon H_1\otimes H_2\to\lscript _{\rmhs}(H_1,H_2)$ satisfies
$J_\ascript ^*=J_\ascript ^{-1}$ and
\begin{equation}                
\label{eq12}
J_\ascript ^*(\alpha )=\sum\elbows{\phi _j\otimes\psi _i,\alpha}\ket{\psi _i}\bra{\phi _j}
\end{equation}
\end{thm}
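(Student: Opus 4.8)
The plan is to exploit Theorem~\ref{thm11}(a) together with the standard fact that $\brac{\phi_j\otimes\psi_i}$ is an orthonormal basis for $H_1\otimes H_2$, so that $J_\ascript$ is simply the linear extension of the bijection $\ket{\psi_i}\bra{\phi_j}\mapsto\phi_j\otimes\psi_i$ carrying one orthonormal basis onto another. A linear map with this property is automatically a Hilbert space isomorphism, and the formulas for the adjoint then drop out.

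First I would verify that $J_\ascript$ is well defined. For $A\in\lscript_{\rmhs}(H_1,H_2)$, Theorem~\ref{thm11}(b) gives $\sum\ab{\elbows{\psi_i,A\phi_j}}^2<\infty$, so the coefficients in \eqref{eq11} are square-summable against the orthonormal family $\brac{\phi_j\otimes\psi_i}$ and the series converges in $H_1\otimes H_2$; linearity of $A\mapsto J_\ascript(A)$ is immediate from linearity of each coefficient $A\mapsto\elbows{\psi_i,A\phi_j}$.

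The central computation is the isometry property. Using that $\brac{\phi_j\otimes\psi_i}$ is orthonormal, then Parseval with respect to $\brac{\psi_i}$ for fixed $j$, and finally the definition of the trace via $\brac{\phi_j}$, I would establish
\[
\doubleab{J_\ascript(A)}^2=\sum\ab{\elbows{\psi_i,A\phi_j}}^2=\sum_j\doubleab{A\phi_j}^2=\rmtr(A^*A)=\doubleab{A}_{\rmhs}^2,
\]
so $J_\ascript$ preserves norms and hence, by polarization, inner products. Surjectivity follows because the range of $J_\ascript$ contains every basis vector $\phi_j\otimes\psi_i$, hence is dense, while an isometry has closed range; thus $J_\ascript$ is a Hilbert space isomorphism.

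For the adjoint, once $J_\ascript$ is known to be unitary the relation $J_\ascript^*J_\ascript=I$ together with bijectivity forces $J_\ascript^*=J_\ascript^{-1}$. To obtain \eqref{eq12} I would apply $J_\ascript$ to the proposed right-hand side: since $J_\ascript\paren{\ket{\psi_i}\bra{\phi_j}}=\phi_j\otimes\psi_i$, the image is $\sum\elbows{\phi_j\otimes\psi_i,\alpha}\phi_j\otimes\psi_i$, which is precisely the Fourier expansion of $\alpha$ in the orthonormal basis $\brac{\phi_j\otimes\psi_i}$, i.e.\ $\alpha$ itself; hence the right-hand side equals $J_\ascript^{-1}(\alpha)=J_\ascript^*(\alpha)$. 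There is no real conceptual obstacle here: the one point requiring care is the bookkeeping of the double index and the interchange of summation, which is justified throughout by the $\ell^2$-convergence established in the isometry step, and I expect that to be the main thing to get right.
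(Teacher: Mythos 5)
Your proof is correct, and while it rests on the same basic observation as the paper's --- that $J_\ascript$ carries the orthonormal basis $\brac{\ket{\psi _i}\bra{\phi _j}}$ of $\lscript _{\rmhs}(H_1,H_2)$ onto the orthonormal basis $\brac{\phi _j\otimes\psi _i}$ of $H_1\otimes H_2$ --- two of your three main steps genuinely differ from the paper's. For surjectivity, the paper constructs an explicit preimage $A=\sum\elbows{\phi _i\otimes\psi _j,\alpha}\ket{\psi _j}\bra{\phi _i}$ of a given $\alpha$ and verifies $J_\ascript (A)=\alpha$ by a double-sum computation with Kronecker deltas, whereas you invoke the softer argument that an isometry has closed range and the range contains a dense set; both are valid, and yours avoids the index bookkeeping at the cost of citing an abstract fact. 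For the adjoint formula \eqref{eq12}, the paper proves it directly by computing $\elbows{A,\sum\elbows{\phi _j\otimes\psi _i,\alpha}\ket{\psi _i}\bra{\phi _j}}_{\rmhs}=\elbows{J_\ascript (A),\alpha}$ for every $A$, which identifies the right-hand side as $J_\ascript ^*(\alpha )$ straight from the definition of the adjoint; you instead apply $J_\ascript$ to the proposed expression, recognize the Fourier expansion of $\alpha$, and conclude via $J_\ascript ^*=J_\ascript ^{-1}$. Your route is shorter but logically requires unitarity (hence surjectivity) to be established first, while the paper's computation stands on its own. Finally, your explicit isometry computation $\doubleab{J_\ascript (A)}^2=\sum _j\doubleab{A\phi _j}^2=\rmtr (A^*A)$, together with the attention to convergence of the defining series, is actually more careful than the paper's one-line assertion that mapping a basis to a basis preserves inner products, so on the question of well-definedness your write-up fills a gap the paper leaves implicit.
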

\begin{proof}
It is clear that $J_\ascript$ is linear. Since $J_\ascript$ takes the basis $\brac{\ket{\psi _i}\bra{\phi _j}}$ to the basis $\brac{\phi _j\otimes\psi _i}$, it follows that $J_\ascript$ preserves inner products. We then conclude that $J_\ascript$ is injective. To show that $J_\ascript$ is surjective, let
$\alpha\in H_1\otimes H_2$. Letting $A\in\lscript _{\rmhs}(H_1,H_2)$ be defined by
$A=\sum\elbows{\phi _i\otimes\psi _j,\alpha}\ket{\psi _j}\bra{\phi _i}$ we have that
\begin{align*}
J_\ascript (A)&=\sum _{i,j}\bra{\psi _i}\sum _{r,s}\elbows{\phi _r\otimes\psi _s,\alpha}\ket{\psi _s}\elbows{\phi _r,\phi _j}\phi _j\otimes\psi _i\\
   &=\sum _{r,s}\elbows{\phi _r\otimes\psi _s,\alpha}\sum _{i,j}\delta _{i,s}\delta _{r,j}\phi _j\otimes\psi _i\\
   &=\sum _{i,j}\elbows{\phi _j\otimes\psi _i,\alpha}\phi _j\otimes\psi _i=\alpha
\end{align*}
so $J_\ascript$ is surjective. Since $J_\ascript$ preserves inner products, we have 
\begin{equation*}
\elbows{J_\ascript ^*J_\ascript (A),B}_{\rmhs}=\elbows{J_\ascript (A),J_\ascript (B)}=\elbows{A,B}_{\rmhs}
\end{equation*}
Hence, $J_\ascript ^*J_\ascript =I$ and similarly $J_\ascript J_\ascript ^*=I$ so $J_\ascript ^*=J_\ascript ^{-1}$. To prove \eqref{eq12} we have for $A\in\lscript _{\rmhs}(H_1,H_2)$, $\alpha\in H_1\otimes H_2$ that
\begin{align*}
\elbows{A,\sum\elbows{\phi _j\otimes\psi _i,\alpha}\ket{\psi _i}\bra{\phi _j}}_{\rmhs}
  &=\sum\elbows{\phi _j\otimes\psi _i,\alpha}\elbows{A,\ket{\psi _i}\bra{\phi _j}}_{\rmhs}\\
  &=\sum\elbows{\phi _j\otimes\psi _i,\alpha}\rmtr\paren{A^*\ket{\psi _i}\bra{\phi _j}}\\
  &=\sum\elbows{\phi _j\otimes\psi _i,\alpha}\elbows{\phi _jA^*\psi _i}\\
  &=\sum\elbows{A\phi _j,\psi _i}\elbows{\phi _j\otimes\psi _i,\alpha}\\
  &=\elbows{\sum\elbows{\psi _i,A\phi _j}\phi _j\otimes\psi _i,\alpha}\\
  &=\elbows{J_\ascript (A),\alpha}=\elbows{A,J_\ascript ^*(\alpha )}
\end{align*}
Hence, \eqref{eq12} holds.
\end{proof}

It follows from Theorem~\ref{thm12} that $J_\ascript ^*(\phi _i\otimes\psi _j)=\ket{\psi _j}\bra{\phi _i}$.

An anti-linear map $K\colon H\to H$ is called a \textit{conjugation} if there exists an orthonormal basis $\bscript =\brac{\phi _i}$ for $H$ such that $K\phi _i=\phi _i$, $i=1,2,\ldots\,$. Thus, $K$ is a conjugation if and only if its spectrum $\sigma (K)=\brac{1}$. Since $K$ depends on the basis
$\bscript$, we sometimes write $K=K_\bscript$.

\begin{lem}    
\label{lem13}
$K\colon H\to H$ is a conjugation if and only if there exists an orthonormal basis $\brac{\phi _i}$ for $H$ such that
\begin{equation}                
\label{eq13}
K\phi =\sum\elbows{\phi ,\phi _i}\phi _i
\end{equation}
for all $\phi\in H$.
\end{lem}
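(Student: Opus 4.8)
The plan is to prove the two implications separately, using only the orthonormal expansion of a vector together with the inner-product convention fixed by the bra-ket notation, namely that $\elbows{\cdot,\cdot}$ is conjugate-linear in its first slot and linear in its second. Throughout I would use the orthonormality $\elbows{\phi_i,\phi_j}=\delta_{ij}$ and the conjugate symmetry $\overline{\elbows{\phi_i,\phi}}=\elbows{\phi,\phi_i}$.

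For the ($\Leftarrow$) direction I would start from the assumption that \eqref{eq13} holds for some orthonormal basis $\brac{\phi_i}$. First I would check that the right-hand side is well defined: by Parseval's identity $\sum\ab{\elbows{\phi,\phi_i}}^2=\doubleab{\phi}^2<\infty$, so the series converges in $H$. Since each coefficient $\phi\mapsto\elbows{\phi,\phi_i}$ is conjugate-linear in $\phi$, the map $\phi\mapsto\sum\elbows{\phi,\phi_i}\phi_i$ is anti-linear, and hence $K$ is anti-linear. Substituting $\phi=\phi_j$ into \eqref{eq13} gives $K\phi_j=\sum_i\elbows{\phi_j,\phi_i}\phi_i=\phi_j$, so $K$ fixes the basis $\brac{\phi_i}$. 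By the definition preceding the lemma, $K$ is therefore a conjugation.

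For the ($\Rightarrow$) direction I would assume $K$ is a conjugation, so that $K$ is anti-linear and $K\phi_i=\phi_i$ for some orthonormal basis $\brac{\phi_i}$. Expanding an arbitrary $\phi\in H$ as $\phi=\sum_i\elbows{\phi_i,\phi}\phi_i$ and applying $K$, anti-linearity gives $K\phi=\sum_i\overline{\elbows{\phi_i,\phi}}\,K\phi_i=\sum_i\elbows{\phi,\phi_i}\phi_i$, which is exactly \eqref{eq13}.

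The step that needs care, and the one I expect to be the only real obstacle, is the interchange of $K$ with the infinite series in the ($\Rightarrow$) direction: anti-linearity by itself only justifies pulling $K$ through finite partial sums, so passing to the limit requires continuity of $K$. I would supply this by treating $K$ as a bounded operator, which is the standing convention here—the very remark that $\sigma(K)=\brac{1}$ presupposes that $K$ is a bounded operator—and one checks directly that on the dense span of $\brac{\phi_i}$ the map is isometric, $\doubleab{K\phi}=\doubleab{\phi}$. With continuity in hand the term-by-term application is legitimate, and the remaining manipulations are routine bookkeeping with $\elbows{\phi_i,\phi_j}=\delta_{ij}$ and $\overline{\elbows{\phi_i,\phi}}=\elbows{\phi,\phi_i}$.
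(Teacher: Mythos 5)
Your proof is correct and follows essentially the same route as the paper's: for ($\Leftarrow$) substitute $\phi=\phi_j$ into \eqref{eq13} to see that $K$ fixes the basis (the paper leaves the anti-linearity check implicit, which you make explicit), and for ($\Rightarrow$) expand $\phi=\sum\elbows{\phi_i,\phi}\phi_i$ and apply $K$ term by term. The only place you go beyond the paper is in rigor: the paper silently interchanges the anti-linear $K$ with the infinite series, whereas you correctly flag that this step requires continuity of $K$ and justify it from the standing convention that $K$ is bounded.
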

\begin{proof}
If \eqref{eq13} holds, then $K\phi _j=\sum\elbows{\phi _j,\phi _i}\phi _i=\phi _j$, $i=1,2,\ldots\,$. Conversely, suppose there exists an orthonormal basis $\brac{\phi _i}$ with $K\phi _i=\phi _i$, $i=1,2,\ldots\,$. Assuming that $K$ is anti-linear, we have for all $\phi\in H$ that
\begin{equation*}
K\phi =K\paren{\sum\elbows{\phi _i,\phi}\phi _i}=\sum\elbows{\phi ,\phi _i}K\phi _i=\sum\elbows{\phi ,\phi _i}\phi _i\qedhere
\end{equation*}
\end{proof}

\begin{cor}    
\label{cor14}
If $K$ is a conjugation, then 
{\rm{(a)}}\enspace $K^2=I$,
{\rm{(b)}}\enspace $K^*=K$,
{\rm{(c)}}\enspace $\elbows{K\phi ,K\psi}=\elbows{\psi ,\phi}$.
\end{cor}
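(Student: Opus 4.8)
The plan is to base everything on the representation \eqref{eq13} furnished by Lemma~\ref{lem13}: fix an orthonormal basis $\brac{\phi_i}$ for which $K\phi=\sum\elbows{\phi,\phi_i}\phi_i$ for all $\phi\in H$. Each of the three assertions should then reduce to a short computation in this basis, the only genuine care being the bookkeeping of the complex conjugates forced by the anti-linearity of $K$.

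For (a) I would apply $K$ twice. Starting from $K\phi=\sum_i\elbows{\phi,\phi_i}\phi_i$ and applying $K$ again, anti-linearity pulls each coefficient out as its conjugate while fixing each basis vector, so that $K^2\phi=\sum_i\overline{\elbows{\phi,\phi_i}}\phi_i=\sum_i\elbows{\phi_i,\phi}\phi_i=\phi$, the last equality being nothing but the expansion of $\phi$ in the orthonormal basis. Hence $K^2=I$.

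For (c) I would expand both $K\phi$ and $K\psi$ via \eqref{eq13} and invoke orthonormality $\elbows{\phi_i,\phi_j}=\delta_{ij}$. Because the inner product is anti-linear in its first argument, $\elbows{K\phi,K\psi}$ collapses to the single sum $\sum_i\overline{\elbows{\phi,\phi_i}}\elbows{\psi,\phi_i}=\sum_i\elbows{\phi_i,\phi}\elbows{\psi,\phi_i}$, which is precisely the basis expansion of $\elbows{\psi,\phi}$ coming from $\phi=\sum_i\elbows{\phi_i,\phi}\phi_i$. This yields $\elbows{K\phi,K\psi}=\elbows{\psi,\phi}$.

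The one point that needs attention is (b), since the paper's definition of the adjoint was stated for linear operators, whereas $K$ is anti-linear; here $K^*$ must be read as the anti-linear adjoint characterized by $\elbows{K^*\phi,\psi}=\elbows{K\psi,\phi}$ for all $\phi,\psi$. The cleanest route is to combine (a) and (c): applying (c) to the pair $\phi,K\psi$ gives $\elbows{K\phi,K(K\psi)}=\elbows{K\psi,\phi}$, and $K(K\psi)=\psi$ by (a), so $\elbows{K\phi,\psi}=\elbows{K\psi,\phi}$ for every $\phi,\psi$; comparison with the defining relation of $K^*$ then forces $K^*=K$. (Alternatively one checks this identity straight from \eqref{eq13}, since both $\elbows{K\phi,\psi}$ and $\elbows{K\psi,\phi}$ equal $\sum_i\elbows{\phi_i,\phi}\elbows{\phi_i,\psi}$ after using $\overline{\elbows{\phi,\phi_i}}=\elbows{\phi_i,\phi}$.) I expect (b) to be the only step requiring the reader to be reminded of the anti-linear adjoint convention; the remaining computations are entirely routine.
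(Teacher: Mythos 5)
Your proposal is correct, but its logical architecture is the reverse of the paper's. The paper proves (a) by checking $K^2\phi _i=\phi _i$ on the basis and invoking linearity of $K^2$, proves (b) \emph{directly} by the computation $\elbows{K^*\phi ,\psi}=\elbows{K\psi ,\phi}=\sum\elbows{\phi _i,\psi}\elbows{\phi _i,\phi}=\elbows{K\phi ,\psi}$, and then obtains (c) as a consequence of (a) and (b) via $\elbows{K\phi ,K\psi}=\elbows{K^*K\psi ,\phi}=\elbows{K^2\psi ,\phi}=\elbows{\psi ,\phi}$. You instead prove (c) directly by expanding both entries with \eqref{eq13} and using orthonormality, and then deduce (b) from (a) and (c) by specializing (c) to the pair $\phi ,K\psi$. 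Both routes rest on the same two ingredients --- the representation \eqref{eq13} and the anti-linear adjoint convention $\elbows{K^*\phi ,\psi}=\elbows{K\psi ,\phi}$ --- so the difference is one of arrangement rather than substance; indeed, your parenthetical ``direct'' verification of (b) is exactly the paper's computation. Your write-up has two small merits: you make the anti-linear adjoint convention explicit, whereas the paper uses it silently in the first equality of its proof of (b) even though its definition of $A^*$ in Section~1 is stated only for linear operators; and your proof of (a) by applying \eqref{eq13} twice sidesteps the paper's slightly garbled justification (``Since $K$ is linear, $K^2=I$,'' where what is meant is that $K^2$, being a composition of two anti-linear maps, is linear). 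One shared caveat: both you and the paper pass $K$ through infinite sums term by term, which tacitly uses continuity of $K$; this is immediate from \eqref{eq13}, since $\doubleab{K\phi}=\doubleab{\phi}$, and is consistent with the paper's own level of rigor (the proof of Lemma~\ref{lem13} does the same).
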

\begin{proof}
(a)\enspace Since $K\phi _i=\phi _i$ for an orthonormal basis $\brac{\phi _i}$, we have
\begin{equation*}
K^2\phi _i=K\phi _i=\phi _i
\end{equation*}
Since $K$ is linear, $K^2=I$.
(b)\enspace To show that $K^*=K$, applying \eqref{eq13} we have for all $\phi ,\psi\in H$ that
\begin{align*}
\elbows{K^*\phi ,\psi}&=\elbows{K\psi ,\phi}=\elbows{\sum\elbows{\psi ,\phi _i}\phi _i,\phi}=\sum\elbows{\phi _i,\psi}\elbows{\phi _i,\phi}\\
   &=\elbows{\sum\elbows{\phi ,\phi _i}\phi _i,\psi}=\elbows{K\phi ,\psi}
\end{align*}
Hence, $K^*\phi =K\phi$ so $K^*=K$.
(c)\enspace By (a) and (b) we have that
\begin{equation*}
\elbows{K\phi ,K\psi}=\elbows{K^*K\psi ,\phi}=\elbows{K^2\psi ,\phi}=\elbows{\psi ,\phi}\qedhere
\end{equation*}
\end{proof}

It follows from Corollary~\ref{cor14}(c) that a conjugation takes an orthonormal basis to an orthonormal basis.

\begin{lem}    
\label{lem15}
Letting $\ascript =\brac{\phi _i,\psi _j}$, $\bscript =\brac{\phi _i}$ we have that
\begin{equation*}
J_\ascript\paren{\ket{\psi}\bra{\phi}}=K_\bscript\phi\otimes\psi,\quad J_\ascript ^*(\phi\otimes\psi )=\ket{\psi}\bra{K_\bscript\phi}
\end{equation*}
for all $\phi\in H_1$, $\psi\in H_2$.
\end{lem}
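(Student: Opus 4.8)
The plan is to verify each identity by direct expansion against the formulas already in hand, taking care with the conjugate-linearity that $K_\bscript$ encodes. For the first identity I would begin from the definition \eqref{eq11}, writing
\[
J_\ascript\paren{\ket{\psi}\bra{\phi}}=\sum_{i,j}\elbows{\psi_i,\ket{\psi}\bra{\phi}\phi_j}\,\phi_j\otimes\psi_i .
\]
Since $\ket{\psi}\bra{\phi}\phi_j=\elbows{\phi,\phi_j}\psi$, the coefficient factors as $\elbows{\phi,\phi_j}\elbows{\psi_i,\psi}$, so the double sum separates into the tensor product of the single sums $\sum_j\elbows{\phi,\phi_j}\phi_j$ and $\sum_i\elbows{\psi_i,\psi}\psi_i$. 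The first of these is exactly $K_\bscript\phi$ by \eqref{eq13}, and the second is the ordinary orthonormal-basis expansion of $\psi$; together they give $K_\bscript\phi\otimes\psi$.

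For the second identity the most economical route is to invert the first rather than recompute. Theorem~\ref{thm12} gives $J_\ascript^*=J_\ascript^{-1}$, and Corollary~\ref{cor14}(a) gives $K_\bscript^2=I$, so replacing $\phi$ by $K_\bscript\phi$ in the first identity yields $J_\ascript\paren{\ket{\psi}\bra{K_\bscript\phi}}=K_\bscript^2\phi\otimes\psi=\phi\otimes\psi$; applying $J_\ascript^*$ to both sides then gives $J_\ascript^*(\phi\otimes\psi)=\ket{\psi}\bra{K_\bscript\phi}$. One could instead expand $J_\ascript^*(\phi\otimes\psi)$ straight from \eqref{eq12}, using $\elbows{\phi_j\otimes\psi_i,\phi\otimes\psi}=\elbows{\phi_j,\phi}\elbows{\psi_i,\psi}$ and recognizing $\sum_i\elbows{\psi_i,\psi}\ket{\psi_i}=\ket{\psi}$ together with $\sum_j\elbows{\phi_j,\phi}\bra{\phi_j}=\bra{K_\bscript\phi}$.

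The only genuinely delicate point is this last recognition, i.e.\ the conjugation bookkeeping. Because the bra is conjugate-linear in its label, $\bra{K_\bscript\phi}$ sends $\chi$ to $\elbows{K_\bscript\phi,\chi}=\sum_j\overline{\elbows{\phi,\phi_j}}\,\elbows{\phi_j,\chi}=\sum_j\elbows{\phi_j,\phi}\elbows{\phi_j,\chi}$, which is precisely the action of $\sum_j\elbows{\phi_j,\phi}\bra{\phi_j}$: the anti-linearity of $K_\bscript$ exactly cancels that of the bra. It is at this junction that a stray complex conjugate would spoil the identity, and it is worth noting that the same subtlety is already present in the first identity, where the bra $\bra{\phi}$ produces the coefficient $\elbows{\phi,\phi_j}$ of \eqref{eq13} rather than the coefficient $\elbows{\phi_j,\phi}$ of the plain expansion of $\phi$ --- which is exactly why $K_\bscript$ appears. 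Everything else is routine rearrangement of absolutely convergent sums, and the inversion shortcut sidesteps the bookkeeping altogether, so I would present it as the main argument.
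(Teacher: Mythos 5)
Your proposal is correct and takes essentially the same route as the paper: the first identity by direct expansion of $J_\ascript\paren{\ket{\psi}\bra{\phi}}$ from \eqref{eq11}, factoring the coefficient as $\elbows{\phi,\phi_j}\elbows{\psi_i,\psi}$ and recognizing $K_\bscript\phi$ via \eqref{eq13}; the second by inverting the first using $J_\ascript^*=J_\ascript^{-1}$ (Theorem~\ref{thm12}) and the substitution $\phi\mapsto K_\bscript\phi$ with $K_\bscript^2=I$. The only differences are cosmetic --- you substitute before applying $J_\ascript^*$ where the paper does the reverse, and your careful conjugation bookkeeping is a sound elaboration of what the paper leaves implicit.
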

\begin{proof}
For $\phi\in H_1$, $\psi\in H_2$ we have that
\begin{align*}
J_\ascript\paren{\ket{\psi}\bra{\phi}}&=\sum _{i,j}\elbows{\psi _i,\ket{\psi}\bra{\phi}\phi _j}\phi _j\otimes\psi _i\\
   &=\sum _{i,j}\elbows{\phi ,\phi _j}\elbows{\psi _i,\psi}\phi _j\otimes\psi _i\\
   &=\sum _j\elbows{\phi ,\phi _j}\phi _j\otimes\sum _i\elbows{\psi _i,\psi}\psi _i=K_\bscript\phi\otimes\psi
\end{align*}
Moreover, we have $\ket{\psi}\bra{\phi}=J_\ascript ^*(K_\bscript\phi\otimes\psi )$ and replacing $\phi$ by $K_\bscript\phi$ gives
\begin{equation*}
J_\ascript ^*(\phi\otimes\psi )=\ket{\psi}\bra{K_\bscript\phi}\qedhere
\end{equation*}
\end{proof}

It has been said that isomorphisms between $\lscript _{\rmhs}(H_1,H_2)$ and $H_1\otimes H_2$ are based on the relation
$\ket{\psi}\bra{\phi}\leftrightarrow\phi\otimes\psi$. This is not entirely correct because the relation is anti-linear in the second argument on the left. The correct relation is given in Lemma~\ref{lem15}.

We now show that $J_\ascript$ is canonical in the sense that it is essentially independent of the bases $\ascript$. To be precise, $J_\ascript$ is unique to within a conjugation.

\begin{cor}    
\label{cor16}
Let $\ascript '=\brac{\phi '_i,\psi '_j}$ be another set of bases and let $\bscript '=\brac{\phi '_i}$. Then
\begin{align*}
J_{\ascript '}(A)&=\sum\elbows{\psi _i,A\phi _j}(K_{\bscript '}\phi _j)\otimes\psi _i
\intertext{and}
J_{\ascript '}^*(\alpha )&=\sum\elbows{\phi _j\otimes\psi _i,\alpha}\ket{\psi _i}\bra{K_{\bscript '}\phi _j}
\end{align*}
\end{cor}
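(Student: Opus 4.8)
The plan is to read off both identities from Lemma~\ref{lem15} by expanding the input in the appropriate orthonormal basis and then using linearity and continuity to bring $J_{\ascript'}$ (respectively $J_{\ascript'}^*$) inside the sum. The key point is that Lemma~\ref{lem15} was proved for an arbitrary pair of bases, so it applies verbatim to the primed bases: for all $\phi\in H_1$, $\psi\in H_2$ we have $J_{\ascript'}\paren{\ket{\psi}\bra{\phi}}=K_{\bscript'}\phi\otimes\psi$ and $J_{\ascript'}^*(\phi\otimes\psi)=\ket{\psi}\bra{K_{\bscript'}\phi}$.

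For the first identity, I would begin with the expansion $A=\sum\elbows{\psi_i,A\phi_j}\ket{\psi_i}\bra{\phi_j}$ of Theorem~\ref{thm11}(b)(iii), taken with respect to the \emph{original} bases $\brac{\phi_j}$ and $\brac{\psi_i}$. Applying $J_{\ascript'}$ and passing it through the series gives $J_{\ascript'}(A)=\sum\elbows{\psi_i,A\phi_j}J_{\ascript'}\paren{\ket{\psi_i}\bra{\phi_j}}$, and the first identity of Lemma~\ref{lem15} turns each summand into $\elbows{\psi_i,A\phi_j}(K_{\bscript'}\phi_j)\otimes\psi_i$, which is exactly the claimed formula. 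For the second identity I would proceed symmetrically: expand $\alpha=\sum\elbows{\phi_j\otimes\psi_i,\alpha}\phi_j\otimes\psi_i$ in the original product basis $\brac{\phi_j\otimes\psi_i}$, apply $J_{\ascript'}^*$ term by term, and use $J_{\ascript'}^*(\phi_j\otimes\psi_i)=\ket{\psi_i}\bra{K_{\bscript'}\phi_j}$ from Lemma~\ref{lem15}.

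The only step needing justification---and the main obstacle---is the interchange of $J_{\ascript'}$ and $J_{\ascript'}^*$ with the infinite sums. This is legitimate because, by Theorem~\ref{thm12}, both maps are Hilbert space isomorphisms, hence bounded and continuous, so they commute with norm-convergent series. Note that the scalars $\elbows{\psi_i,A\phi_j}$ and $\elbows{\phi_j\otimes\psi_i,\alpha}$ are pulled out by the \emph{linear} maps $J_{\ascript'}$, $J_{\ascript'}^*$, so the anti-linearity of $K_{\bscript'}$ never interferes with the passage to the limit.
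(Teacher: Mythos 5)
Your proposal is correct and follows exactly the paper's own route: expand $A$ (respectively $\alpha$) in the \emph{unprimed} basis $\brac{\ket{\psi_i}\bra{\phi_j}}$ (respectively $\brac{\phi_j\otimes\psi_i}$), pass the linear map $J_{\ascript'}$ (respectively $J_{\ascript'}^*$) through the sum, and invoke Lemma~\ref{lem15} applied to the primed bases on each term. Your explicit justification of the interchange with the infinite series via boundedness, and your remark that the anti-linearity of $K_{\bscript'}$ never touches the scalar coefficients, are details the paper leaves tacit but do not constitute a different argument.
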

\begin{proof}
Applying Lemma~\ref{lem15} we have that
\begin{align*}
J_{\ascript '}(A)&=J_{\ascript '}\paren{\sum\elbows{\psi _i,A\phi _j}\ket{\psi _i}\bra{\phi _j}}
  =\sum\elbows{\psi _i,A\phi _j}J_{\ascript '}\paren{\ket{\psi _i}\bra{\phi _j}}\\
  &\sum\elbows{\psi _i,A\phi _j}(K_{\bscript '}\phi _j)\otimes\psi _i
\end{align*}
The second equation is similar.
\end{proof}

We have seen that $J_\ascript\colon\lscript _{\rmhs}(H_1,H_2)\to H_1\otimes H_2$ is a Hilbert space isomorphism. We now apply $J_\ascript$ 
to obtain a map
\begin{equation*}
R\colon\lscript\paren{\lscript _{\rmhs}(H_1,H_2)}\to\lscript (H_1\otimes H_2)
\end{equation*}
defined by $R(B)=J_\ascript BJ_\ascript ^*$. We also define the map
\begin{equation*}
S\colon\lscript (H_1\otimes H_2)\to\lscript\paren{\lscript _{\rmhs}(H_1,H_2)}
\end{equation*}
by $S(C)=J_\ascript ^*CJ_\ascript$. We treat $\lscript (H_1\otimes H_2)$ and $\lscript\paren{\lscript _{\rmhs}(H_1,H_2)}$ as $C^*$-algebras in the usual way.

\begin{thm}    
\label{thm17}
The maps, $R$ and $S$ are $C^*$-algebra isomorphisms and $S=R^{-1}$.
\end{thm}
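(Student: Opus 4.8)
The plan is to exploit the fact, established in Theorem~\ref{thm12}, that $J_\ascript$ is unitary, meaning $J_\ascript ^*J_\ascript =J_\ascript J_\ascript ^*=I$. Writing $J=J_\ascript$ for brevity, both $R(B)=JBJ^*$ and $S(C)=J^*CJ$ are then conjugations by a unitary, and the standard fact that such conjugations are $C^*$-algebra isomorphisms will carry the entire argument. Concretely, I would verify in turn that $R$ (and symmetrically $S$) is well-defined and linear, multiplicative, adjoint-preserving, and a bijection whose inverse is $S$.

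Boundedness and linearity are immediate: for $B\in\lscript\paren{\lscript _{\rmhs}(H_1,H_2)}$ the composition $JBJ^*$ is a bounded operator on $H_1\otimes H_2$, and $R$ is linear because operator composition is linear in the middle factor. For multiplicativity I would insert the identity $J^*J=I$ between the two factors, $R(B_1B_2)=JB_1B_2J^*=JB_1(J^*J)B_2J^*=(JB_1J^*)(JB_2J^*)=R(B_1)R(B_2)$. For adjoint-preservation I would use $J^{**}=J$ to obtain $R(B)^*=(JBJ^*)^*=JB^*J^*=R(B^*)$. The same two identities yield the corresponding properties of $S$.

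Finally, to see that $R$ and $S$ are mutually inverse bijections I would again bracket the factors using $J^*J=I$ and $JJ^*=I$, giving $S\paren{R(B)}=J^*(JBJ^*)J=(J^*J)B(J^*J)=B$ and likewise $R\paren{S(C)}=C$. Hence $S=R^{-1}$, and in particular both maps are bijective $*$-homomorphisms between $C^*$-algebras, so each is a $C^*$-algebra isomorphism.

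I do not expect a genuine obstacle here; the content is entirely contained in the unitarity $J_\ascript ^*=J_\ascript ^{-1}$ of Theorem~\ref{thm12}. The only points requiring a moment's care are confirming that $R$ lands in $\lscript (H_1\otimes H_2)$ and $S$ in $\lscript\paren{\lscript _{\rmhs}(H_1,H_2)}$, which follows from boundedness of $J$ and $J^*$, and recalling that a bijective $*$-homomorphism of $C^*$-algebras is automatically isometric, so that no separate norm estimate is needed.
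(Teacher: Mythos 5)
Your proposal is correct and follows essentially the same route as the paper: both arguments rest entirely on the unitarity $J_\ascript ^*=J_\ascript ^{-1}$ from Theorem~\ref{thm12}, inserting $J_\ascript ^*J_\ascript =I$ to get multiplicativity, computing $(J_\ascript BJ_\ascript ^*)^*=J_\ascript B^*J_\ascript ^*$ for adjoint-preservation, and bracketing with $J_\ascript J_\ascript ^*=I$ to show $R$ and $S$ are mutually inverse. The only (harmless) divergence is at the norm step: the paper verifies $\doubleab{R(B)}=\doubleab{B}$ by an explicit supremum computation using that $J_\ascript ^*$ is a surjective isometry, whereas you invoke the standard fact that a bijective $*$-homomorphism of $C^*$-algebras is automatically isometric --- both are valid, yours being slightly less self-contained but shorter.
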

\begin{proof}
Clearly, $R$ and $S$ are linear, $R(I)=I$ and
\begin{align*}
RS(C)&=J_\ascript S(C)J_\ascript ^*=J_\ascript J_\ascript ^*CJ_\ascript J_\ascript ^*=C\\
SR(B)&=J_\ascript ^*R(B)J_\ascript =J_\ascript ^*J_\ascript BJ_\ascript ^*J_\ascript =B
\end{align*}
It follows that $R$ and $S$ are bijections and $S=R^{-1}$. Since
\begin{equation*}
R(B_1B_2)=J_\ascript B_1B_2J_\ascript ^*=J_\ascript B_1J_\ascript ^*J_\ascript B_2J_\ascript ^*=R(B_1)R(B_2)
\end{equation*}
and similarly, $S(C_1C_2)=S(C_1)S(C_2)$ we conclude that $R$ and $S$ preserve products. Also
\begin{equation*}
R(B^*)=J_\ascript B^*J_\ascript ^*=(J_\ascript BJ_\ascript ^*)^*=R(B)^*
\end{equation*}
so $R$ and $S$ preserve adjoints. Since $J_\ascript$ and $J_\ascript ^*$ preserve norms and $J_\ascript ^*$ is surjective, we have
\begin{align*}
\doubleab{R(B)}&=\sup _{\doubleab{x}=1}\doubleab{R(B)x}=\sup _{\doubleab{x}=1}\doubleab{J_\ascript BJ_\ascript ^*x}
   =\sup _{\doubleab{x}=1}\doubleab{BJ_\ascript ^*x}\\
   &=\sup _{\doubleab{J_\ascript ^*x}=1}\doubleab{BJ_\ascript ^*x}=\sup _{\doubleab{y}=1}\doubleab{By}=\doubleab{B}
\end{align*}
Similarly, $\doubleab{S(C)}=\doubleab{C}$. Since $R$ and $S$ are bijections that preserve $C^*$-algebra operations, they are $C^*$-algebra isomorphisms.
\end{proof}

It follows from Theorem~\ref{thm17} that $R$ and $S$ are positive operators, that is, they send positive elements to positive elements. It is a little less clear that $R$ and $S$ are completely positive in the following sense.

\begin{lem}    
\label{lem18}
For all $A\in\lscript\paren{\lscript _{\rmhs}(H_1,H_2)}$, $B\in\lscript (H_1\otimes H_2)$ we have
\begin{equation*}
\sum _{i,j}B_i^*R(A_i^*A_j)B_j\ge 0
\end{equation*}
\end{lem}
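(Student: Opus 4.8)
The plan is to reduce this to the elementary fact that any operator of the form $D^*D$ is positive, using the structure of $R$ already established in Theorem~\ref{thm17}. I read the statement as asserting that for every finite collection $A_1,\dots,A_n\in\lscript\paren{\lscript_{\rmhs}(H_1,H_2)}$ and $B_1,\dots,B_n\in\lscript(H_1\otimes H_2)$ the displayed sum is a positive operator; this is precisely the condition that the $C^*$-algebra isomorphism $R$ be completely positive.

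First I would invoke the multiplicativity and adjoint-preservation of $R$ from Theorem~\ref{thm17} to rewrite each summand. Since $R(A_i^*A_j)=R(A_i^*)R(A_j)=R(A_i)^*R(A_j)$, the sum becomes
\begin{equation*}
\sum_{i,j}B_i^*R(A_i)^*R(A_j)B_j=\sum_{i,j}\paren{R(A_i)B_i}^*\paren{R(A_j)B_j}.
\end{equation*}
Setting $C_k=R(A_k)B_k$ and $D=\sum_k C_k$, I would then observe that the double sum collapses:
\begin{equation*}
\sum_{i,j}C_i^*C_j=\paren{\sum_i C_i}^*\paren{\sum_j C_j}=D^*D.
\end{equation*}
Because $D^*D\ge 0$ for every bounded operator $D$, this gives the claim at once.

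There is no serious obstacle here: the whole argument is the standard observation that a $*$-homomorphism of $C^*$-algebras is automatically completely positive, and the only mechanism needed is the telescoping of the double sum into a single $D^*D$. The one point meriting attention is the correct pairing of indices when passing from $B_i^*R(A_i)^*R(A_j)B_j$ to $\paren{\sum_i C_i}^*\paren{\sum_j C_j}$, together with the implicit finiteness of the index set that makes $D$ a well-defined element of $\lscript(H_1\otimes H_2)$.
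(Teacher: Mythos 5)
Your proof is correct and is essentially identical to the paper's own argument: both use the multiplicativity and adjoint-preservation of $R$ from Theorem~\ref{thm17} to rewrite $R(A_i^*A_j)=R(A_i)^*R(A_j)$ and then collapse the double sum into $\paren{\sum_i R(A_i)B_i}^*\paren{\sum_j R(A_j)B_j}\ge 0$. Your explicit remarks on the interpretation of the indexed families and the finiteness of the index set are a small clarity bonus over the paper's terser presentation, but the mathematical content is the same.
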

\begin{proof}
This follows from
\begin{align*}
\sum _{i,j}B_i^*R(A_i^*A_j)B_j&=\sum _{i,j}B_i^*R(A_i)^*R(A_j)B_j=\sum _i\sqbrac{R(A_i)B_i}^*\sum _jR(A_j)B_j\\
   &=\sqbrac{\sum _iR(A_i)B_i}^*\sqbrac{\sum _jR(A_j)B_j}\ge 0\qedhere
\end{align*}
\end{proof}

The next result shows that $R$ preserves the HS operator structure.

\begin{thm}    
\label{thm19}
{\rm{(a)}}\enspace If $B\in\lscript _{\rmhs}\paren{\lscript _{\rmhs}(H_1,H_2)}$, then $R(B)\in\lscript _{\rmhs}(H_1\otimes H_2)$.
{\rm{(b)}}\enspace If $B_1,B_2\in\lscript _{\rmhs}\paren{\lscript _{\rmhs}(H_1,H_2)}$, then
$\elbows{R(B_1),R(B_2)}_{\rmhs}=\elbows{B_1,B_2}_{\rmhs}$.
\end{thm}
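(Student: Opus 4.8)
The plan is to exploit the fact that $R$ is nothing more than conjugation by the Hilbert space isomorphism $J_\ascript$, so that both assertions reduce to the invariance of the trace under the basis change that $J_\ascript$ induces. First I would record the algebraic identity underlying everything. Since $R(B)=J_\ascript BJ_\ascript ^*$ and $J_\ascript ^*J_\ascript =I$ by Theorem~\ref{thm12}, a direct computation gives
\begin{equation*}
R(B_1)^*R(B_2)=J_\ascript B_1^*J_\ascript ^*J_\ascript B_2J_\ascript ^*=J_\ascript\paren{B_1^*B_2}J_\ascript ^*.
\end{equation*}
Thus everything hinges on computing $\rmtr\sqbrac{J_\ascript\paren{B_1^*B_2}J_\ascript ^*}$, the trace being taken on $H_1\otimes H_2$, recalling that $R(B_1)$ and $R(B_2)$ are already known to be bounded by Theorem~\ref{thm17}.

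The key step is to establish the trace identity $\rmtr\paren{J_\ascript TJ_\ascript ^*}=\rmtr (T)$ for suitable $T\in\lscript\paren{\lscript _{\rmhs}(H_1,H_2)}$. Here one must be a little careful, since the left-hand trace is computed on $H_1\otimes H_2$ while the right-hand trace is computed on $\lscript _{\rmhs}(H_1,H_2)$, so this is not literally an appeal to cyclicity of a single trace. Instead I would compute the left-hand trace using the orthonormal basis $\brac{\phi _j\otimes\psi _i}$ of $H_1\otimes H_2$. Writing $\beta$ for a generic element of this basis and using $J_\ascript J_\ascript ^*=I$ to set $\gamma =J_\ascript ^*\beta$, so that $\beta =J_\ascript\gamma$ and $\brac{\gamma}$ is an orthonormal basis of $\lscript _{\rmhs}(H_1,H_2)$, the inner-product preservation of $J_\ascript$ from Theorem~\ref{thm12} gives
\begin{equation*}
\elbows{\beta ,J_\ascript TJ_\ascript ^*\beta}=\elbows{J_\ascript\gamma ,J_\ascript T\gamma}=\elbows{\gamma ,T\gamma}_{\rmhs}.
\end{equation*}
Summing over the basis then yields $\rmtr\paren{J_\ascript TJ_\ascript ^*}=\rmtr (T)$.

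With this identity in hand, both parts follow by taking $T=B_1^*B_2$. For (a) I set $B_1=B_2=B$; since $B^*B$ is positive, every term $\elbows{\gamma ,B^*B\gamma}_{\rmhs}\ge 0$, so the sum is unambiguous and equals $\rmtr\paren{B^*B}=\doubleab{B}_{\rmhs}^2<\infty$ by hypothesis, whence $R(B)$ is HS. For (b) the identity directly gives
\begin{equation*}
\elbows{R(B_1),R(B_2)}_{\rmhs}=\rmtr\sqbrac{R(B_1)^*R(B_2)}=\rmtr\paren{B_1^*B_2}=\elbows{B_1,B_2}_{\rmhs}.
\end{equation*}
The only genuine subtlety is the one already flagged, namely the justification of the change-of-basis trace computation across two different Hilbert spaces; finiteness and basis-independence of the traces are guaranteed because $B_1,B_2$ being HS makes $B_1^*B_2$ trace class, so no convergence obstruction arises.
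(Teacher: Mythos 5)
Your proposal is correct and follows essentially the same route as the paper: both proofs reduce everything to the trace of $B^*B$ (respectively $B_1^*B_2$) by computing the trace on $H_1\otimes H_2$ over the basis $\brac{\phi _i\otimes\psi _j}$ and transporting it through the unitary $J_\ascript$ via inner-product preservation. The only cosmetic difference is that the paper identifies the transported basis explicitly as $\brac{\ket{\psi _j}\bra{\phi _i}}$ using Lemma~\ref{lem15}, whereas you invoke the abstract fact that a Hilbert space isomorphism carries an orthonormal basis to an orthonormal basis; your added remarks on positivity (for part (a)) and on $B_1^*B_2$ being trace class (for part (b)) make the convergence and basis-independence issues more explicit than the paper does.
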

\begin{proof}
(a)\enspace If $B\in\lscript _{\rmhs}\paren{\lscript _{\rmhs}(H_1,H_2)}$ and $\brac{\phi _i}$, $\brac{\psi _i}$ are orthonormal bases for $H_1$, $H_2$, respectively, we have by Lemma~\ref{lem15} that
\begin{align*}
\rmtr\sqbrac{R(B)^*R(B)}&=\rmtr R(B^*B)=\rmtr (J_\ascript B^*BJ_\ascript ^*)\\
   &=\sum\elbows{\phi _i\otimes\psi _jJ_\ascript B^*BJ_\ascript ^*\phi _i\otimes\psi _j}\\
   &=\sum\elbows{J_\ascript ^*\phi _i\otimes\psi _j,B^*BJ_\ascript ^*\phi _i\otimes\psi _j}_{\rmhs}\\
   &=\sum\elbows{\ket{\psi _j}\bra{\phi _i},B^*B\ket{\psi _j}\bra{\phi _i}}_{\rmhs}\\
   &=\rmtr (B^*B)<\infty
\end{align*}
Hence, $R(B)\in\lscript _{\rmhs}(H_1\otimes H_2)$.
(b)\enspace If $B_1,B_2\in\lscript _{\rmhs}\paren{\lscript _{\rmhs}(H_1,H_2)}$ we have
\begin{align*}
\elbows{R(B_1),R(B_2)}_{\rmhs}&=\sum\elbows{J_\ascript B_1J_\ascript ^*\phi _i\otimes\psi _j,J_\ascript B_2J_\ascript ^*\phi _i\otimes\psi _j}\\
   &=\sum\elbows{J_\ascript B_1\ket{\psi _j}\bra{\phi _i},J_\ascript B_2\ket{\psi _j}\bra{\phi _i}}\\
   &=\sum\elbows{B_1\ket{\psi _j}\bra{\phi _i},B_2\ket{\psi _j}\bra{\phi _i}}\\
   &=\sum\elbows{\ket{\psi _j}\bra{\phi _i},B_1^*B_2\ket{\psi _j}\bra{\phi _i}}\\
   &=\rmtr (B_1^*B_2)=\elbows{B_1,B_2}_{\rmhs}\qedhere
\end{align*}
\end{proof}

An analogous result holds for $S\colon\lscript _{\rmhs}(H_1\otimes H_2)\to\lscript _{\rmhs}\paren{\lscript _{\rmhs}(H_1,H_2)}$.

\section{Comparison with the Choi Isomorphism}  
We now give alternative forms for the operator isomorphism considered in Section~1. For an orthonormal basis $\bscript =\brac{\phi _i}$ of
$H_1$, define the map $T_\bscript\colon\lscript _{\rmhs}(H_1,H_2)\to H_1\otimes H_2$ by $T_\bscript (A)=\sum\phi _j\otimes A\phi _j$. It is not
yet clear that $T_\bscript (A)\in H_1\otimes H_2$, but this will follow from our next result. In fact, we shall show that $T_\bscript =J_\ascript$. This alternative form $T_\bscript$ is simpler than $J_\ascript$ and it only relies on the basis $\bscript$ of $H_1$. This form will be useful in our comparison with Choi's isomorphism \cite{cho75,hz12} considered later. Unfortunately, $T_\bscript ^*$ in this new form is not as simple as
$J_\ascript ^*$.

\begin{lem}    
\label{lem21}
$T_\bscript =J_\ascript$
\end{lem}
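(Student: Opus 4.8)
The plan is to prove $T_\bscript = J_\ascript$ by a direct computation, paying attention to the convergence issue flagged just before the statement. First I would fix $A\in\lscript _{\rmhs}(H_1,H_2)$ and expand each vector $A\phi _j\in H_2$ in the orthonormal basis $\brac{\psi _i}$, namely $A\phi _j=\sum _i\elbows{\psi _i,A\phi _j}\psi _i$. Substituting this into the defining formula $T_\bscript (A)=\sum _j\phi _j\otimes A\phi _j$ and distributing the tensor product over the sum gives $\sum _{i,j}\elbows{\psi _i,A\phi _j}\,\phi _j\otimes\psi _i$, which is exactly $J_\ascript (A)$ by \eqref{eq11}. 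Thus the content of the lemma reduces to justifying that these manipulations are legitimate.

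The step that requires care -- and the reason it is not yet clear that $T_\bscript (A)\in H_1\otimes H_2$ -- is the convergence of the series $\sum _j\phi _j\otimes A\phi _j$ in $H_1\otimes H_2$. I would establish this by observing that the summands are mutually orthogonal, since $\elbows{\phi _j\otimes A\phi _j,\phi _k\otimes A\phi _k}=\elbows{\phi _j,\phi _k}\elbows{A\phi _j,A\phi _k}=\delta _{jk}\doubleab{A\phi _j}^2$. Hence the partial sums of $\sum _j\phi _j\otimes A\phi _j$ are Cauchy exactly when $\sum _j\doubleab{A\phi _j}^2<\infty$. But $\sum _j\doubleab{A\phi _j}^2=\sum _{i,j}\ab{\elbows{\psi _i,A\phi _j}}^2=\rmtr (A^*A)<\infty$ because $A$ is HS, using Theorem~\ref{thm11}(b). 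This simultaneously shows that $T_\bscript (A)$ is a well-defined element of $H_1\otimes H_2$ and that the interchange of the $i$- and $j$-summations in the first paragraph is valid.

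The only genuine obstacle is this convergence and interchange bookkeeping; once the relevant sums are known to converge, the algebraic identity is immediate. An alternative and arguably cleaner route would be to check the equality on the orthonormal basis $\brac{\ket{\psi _i}\bra{\phi _j}}$ of $\lscript _{\rmhs}(H_1,H_2)$, where a one-line computation gives $T_\bscript\paren{\ket{\psi _i}\bra{\phi _j}}=\sum _k\phi _k\otimes\elbows{\phi _j,\phi _k}\psi _i=\phi _j\otimes\psi _i=J_\ascript\paren{\ket{\psi _i}\bra{\phi _j}}$, and then to extend the identity to all of $\lscript _{\rmhs}(H_1,H_2)$ by linearity and continuity, using the norm equality $\doubleab{T_\bscript (A)}^2=\sum _j\doubleab{A\phi _j}^2=\doubleab{A}_{\rmhs}^2$ obtained from the orthogonality above. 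I expect to present the direct computation as the main line of argument, with the convergence estimate serving as the key supporting step, and to remark that this same estimate shows $T_\bscript$ is an isometry, consistent with its coinciding with the Hilbert space isomorphism $J_\ascript$.
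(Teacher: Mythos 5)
Your proposal is correct and takes essentially the same approach as the paper's proof: expand $A\phi _j$ in the orthonormal basis $\brac{\psi _i}$, substitute into $T_\bscript (A)=\sum _j\phi _j\otimes A\phi _j$, and recognize the resulting double sum as $J_\ascript (A)$. The convergence justification you add (mutual orthogonality of the summands $\phi _j\otimes A\phi _j$ together with $\sum _j\doubleab{A\phi _j}^2=\rmtr (A^*A)<\infty$) is a worthwhile refinement of a point the paper leaves implicit, not a different route.
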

\begin{proof}
For any $A\in\lscript _{\rmhs}(H_1,H_2)$ and orthonormal basis $\brac{\psi _i}$ of $H_2$, letting $\ascript =\brac{\phi _i,\psi _j}$ we have
$A\phi _j=\sum\limits _i\elbows{\psi _i,A\phi _j}\psi _i$. Hence,
\begin{align*}
T_\bscript (A)&=\sum _j\phi _j\otimes A\phi _j=\sum _j\phi _j\otimes\sum\elbows{\psi _i,A\phi _j}\psi _i\\
   &=\sum _{i,j}\elbows{\psi _i,A\phi _j}\phi _j\otimes\psi _i=J_\ascript (A)\qedhere
\end{align*}
\end{proof}

Corresponding to the bases $\ascript =\brac{\phi _i,\psi _j}$ define the operators $P_i\colon H_1\otimes H_2\to H_2$ by
\begin{equation*}
P_i(\alpha )=\sum _j\elbows{\phi _i\otimes\psi _j,\alpha}\psi _j
\end{equation*}

\begin{lem}    
\label{lem22}
{\rm{(a)}}\enspace $P_i(\phi\otimes\psi )=\elbows{\phi _i,\phi}\psi$ for all $\phi\in H_1$, $\psi\in H_2$.
{\rm{(b)}}\enspace $\sum\phi _i\otimes P_i\alpha =\alpha$ for all $\alpha\in H_1\otimes H_2$.
{\rm{(b)}}\enspace $P_i^*(\beta )=\sum _j\elbows{\psi _j,\beta}\phi _i\otimes\psi _j$ for all $\beta\in H_2$.
\end{lem}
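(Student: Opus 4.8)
The plan is to verify each of the three identities by direct computation from the definition of $P_i$, relying on the relation $P_i(\alpha)=\sum_j\elbows{\phi_i\otimes\psi_j,\alpha}\psi_j$ together with the orthonormality of the bases $\brac{\phi_i}$ and $\brac{\psi_j}$. For part (a) I would substitute $\alpha=\phi\otimes\psi$ into the definition and use that $\elbows{\phi_i\otimes\psi_j,\phi\otimes\psi}=\elbows{\phi_i,\phi}\elbows{\psi_j,\psi}$ factorizes across the tensor product; the sum over $j$ then collapses via $\sum_j\elbows{\psi_j,\psi}\psi_j=\psi$ (the expansion of $\psi$ in the basis $\brac{\psi_j}$), leaving $\elbows{\phi_i,\phi}\psi$. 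Since both sides are linear in $\alpha$, establishing the identity on simple tensors is enough to conclude it for general $\alpha\in H_1\otimes H_2$.

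For the second part (the first of the two items both labeled (b) in the statement), I would start from the expression $\sum_i\phi_i\otimes P_i\alpha$, insert the definition of $P_i\alpha$, and obtain
\begin{equation*}
\sum_i\phi_i\otimes P_i\alpha=\sum_{i,j}\elbows{\phi_i\otimes\psi_j,\alpha}\phi_i\otimes\psi_j.
\end{equation*}
The right-hand side is precisely the expansion of $\alpha$ in the orthonormal basis $\brac{\phi_i\otimes\psi_j}$ of $H_1\otimes H_2$, which equals $\alpha$. I would note that $\brac{\phi_i\otimes\psi_j}$ is indeed an orthonormal basis for the tensor product, a standard fact. This same computation simultaneously confirms that $P_i$ is a well-defined bounded operator into $H_2$, since $\sum_j\ab{\elbows{\phi_i\otimes\psi_j,\alpha}}^2\le\doubleab{\alpha}^2$ by Bessel's inequality.

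For the last part (computing the adjoint $P_i^*$), I would verify the candidate formula by checking the defining relation $\elbows{P_i\alpha,\beta}_{H_2}=\elbows{\alpha,P_i^*\beta}$ for all $\alpha\in H_1\otimes H_2$ and $\beta\in H_2$. Expanding the left side using the definition of $P_i$ gives $\sum_j\overline{\elbows{\phi_i\otimes\psi_j,\alpha}}\elbows{\psi_j,\beta}$, and I would rearrange this into $\elbows{\alpha,\sum_j\elbows{\psi_j,\beta}\phi_i\otimes\psi_j}$ by pulling the conjugate onto the other factor of the inner product; this identifies the bracketed vector as $P_i^*\beta$, matching the claimed formula. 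None of these steps presents a genuine obstacle—the computations are routine bookkeeping with inner products and basis expansions. If anything, the only point requiring mild care is the interchange of summation order and the implicit appeal to convergence in the infinite-dimensional setting, which is justified throughout by the Hilbert–Schmidt and $\ell^2$ summability already guaranteed by Theorem~\ref{thm11}.
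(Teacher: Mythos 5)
Your proof is correct, and for parts (a) and (b) it coincides with the paper's own computations: expand $P_i(\phi\otimes\psi)$ from the definition and collapse the $j$-sum via $\sum_j\elbows{\psi_j,\psi}\psi_j=\psi$; insert the definition of $P_i\alpha$ into $\sum_i\phi_i\otimes P_i\alpha$ and recognize the expansion of $\alpha$ in the orthonormal basis $\brac{\phi_i\otimes\psi_j}$. The one place you take a different route is the adjoint formula. The paper computes $\elbows{\phi\otimes\psi,P_i^*(\beta)}$ on simple tensors only, reusing part (a) to evaluate $\elbows{P_i(\phi\otimes\psi),\beta}$, and then says ``the result follows'' --- implicitly invoking the totality of simple tensors in $H_1\otimes H_2$. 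You instead verify $\elbows{P_i\alpha,\beta}=\elbows{\alpha,P_i^*\beta}$ for arbitrary $\alpha\in H_1\otimes H_2$ straight from the definition of $P_i$, which eliminates both the appeal to (a) and the density step; the price is that you must justify passing the infinite sum through the inner product, which follows from Cauchy--Schwarz applied to the two $\ell^2$ sequences $\paren{\elbows{\alpha,\phi_i\otimes\psi_j}}_j$ and $\paren{\elbows{\psi_j,\beta}}_j$. Two small corrections: your closing remark in (a) about extending by linearity to general $\alpha$ is vacuous, since the identity in (a) is stated (and only makes sense) for simple tensors; and the summability needed in your final step is plain Bessel/Parseval for vectors in a Hilbert space, not the Hilbert--Schmidt summability of Theorem~\ref{thm11}, so that citation is misplaced.
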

\begin{proof}
(a)\enspace For all $\phi\in H_1$, $\psi\in H_2$ we have
\begin{equation*}
P_i(\phi\otimes\psi )=\sum _j\elbows{\phi _i\otimes\psi _j,\phi\otimes\psi}\psi _j=\sum _j\elbows{\phi _i,\phi}\elbows{\psi _j,\psi}\psi _j
   =\elbows{\phi _i,\phi}\psi
\end{equation*}
(b)\enspace For all $\alpha\in H_1\otimes H_2$ we have
\begin{equation*}
\sum\phi _i\otimes P_i\alpha =\sum _i\phi _i\otimes\sum _j\elbows{\phi _i\otimes\psi _j,\alpha}\psi _j
   =\sum _{i,j}\elbows{\phi _i\otimes\psi _j,\alpha}\phi _i\otimes\psi _j=\alpha
\end{equation*}
(c)\enspace For all $\phi\in H_1$, $\psi$, $\beta\in H_2$, applying (a) gives
\begin{align*}
\elbows{\phi\otimes\psi ,P_i^*(\beta )}&=\elbows{P_i(\phi\otimes\psi ),\beta}=\elbows{\elbows{\phi _i,\phi}\psi ,\beta}
   =\elbows{\phi ,\phi _i}\elbows{\psi ,\beta}\\
   &=\elbows{\phi ,\phi _i}\sum _j\elbows{\psi ,\psi _j}\elbows{\psi _j,\beta}
     =\sum _j\elbows{\psi _j,\beta}\elbows{\phi ,\phi _i}\elbows{\psi ,\psi _j}\\
     &=\elbows{\phi\otimes\psi ,\sum _j\elbows{\psi _j,\beta}\phi _i\otimes\psi _j}
\end{align*}
The result follows.
\end{proof}

We see from Lemma~\ref{lem22}(a) that $P_i$ is analogous to a partial trace in the direction $\phi _i$.

\begin{lem}    
\label{lem23}
For all $\alpha\in H_1\otimes H_2$ we have $T_\bscript ^*(\alpha )=\sum\limits _i\ket{P_j\alpha}\bra{\phi _j}$.
\end{lem}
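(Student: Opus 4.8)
The plan is to reduce the claim to results already in hand rather than to recompute the adjoint from scratch. By Lemma~\ref{lem21} we have $T_\bscript =J_\ascript$, hence $T_\bscript ^*=J_\ascript ^*$, and Theorem~\ref{thm12} supplies the explicit formula \eqref{eq12}, namely $J_\ascript ^*(\alpha )=\sum _{i,j}\elbows{\phi _j\otimes\psi _i,\alpha}\ket{\psi _i}\bra{\phi _j}$. Thus it suffices to show that this double sum regroups, summand by summand in $j$, into $\sum _j\ket{P_j\alpha}\bra{\phi _j}$. The whole argument is then a matter of recognizing the inner sum over $i$ as $P_j\alpha$.

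First I would record that for fixed $\phi _j$ the rank-one map $\eta\mapsto\ket{\eta}\bra{\phi _j}$ is linear in the ket $\eta$, since applying it to any $\xi$ gives $\elbows{\phi _j,\xi}\eta$. This lets me pull the scalars $\elbows{\phi _j\otimes\psi _i,\alpha}$ inside the ket, so that for each fixed $j$,
\[
\ket{P_j\alpha}\bra{\phi _j}=\Big\lvert\sum _i\elbows{\phi _j\otimes\psi _i,\alpha}\psi _i\Big\rangle\bra{\phi _j}=\sum _i\elbows{\phi _j\otimes\psi _i,\alpha}\ket{\psi _i}\bra{\phi _j},
\]
where the first equality is just the definition of $P_j$. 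Summing over $j$ and comparing with \eqref{eq12} gives $\sum _j\ket{P_j\alpha}\bra{\phi _j}=J_\ascript ^*(\alpha )=T_\bscript ^*(\alpha )$. The only point needing a word of justification is the interchange of the two summations: because $\alpha\in H_1\otimes H_2$, the family $\brac{\elbows{\phi _j\otimes\psi _i,\alpha}}$ is square summable, and by Theorem~\ref{thm11}(a) the operators $\brac{\ket{\psi _i}\bra{\phi _j}}$ form an orthonormal basis of $\lscript _{\rmhs}(H_1,H_2)$, so the partial sums over $i$ (giving $\ket{P_j\alpha}\bra{\phi _j}$) and then over $j$ converge in HS norm to the same limit as the double sum, exactly as in the proof of Theorem~\ref{thm12}. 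Hence the rearrangement is valid.

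Alternatively, one can give a self-contained verification that bypasses \eqref{eq12}, checking the defining adjoint relation $\elbows{T_\bscript (A),\alpha}=\elbows{A,\sum _j\ket{P_j\alpha}\bra{\phi _j}}_{\rmhs}$ for every $A\in\lscript _{\rmhs}(H_1,H_2)$ directly from $T_\bscript (A)=\sum _j\phi _j\otimes A\phi _j$, using Lemma~\ref{lem22}(a) to evaluate the resulting inner products against the candidate operator. I expect the index bookkeeping to be the only real nuisance in either route; there is no genuine obstacle, since the substantive work was already carried out in Theorem~\ref{thm12} and Lemma~\ref{lem21}, and the present lemma merely rewrites that adjoint in terms of the partial-trace-like maps $P_j$.
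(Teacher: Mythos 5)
Your proposal is correct and is essentially the paper's own proof: the paper likewise combines $T_\bscript^*=J_\ascript^*$ (from Lemma~\ref{lem21}) with formula \eqref{eq12}, and regroups the double sum by expanding $P_j\alpha$ inside the ket --- you simply run the computation in the opposite direction and add an explicit (and harmless) convergence justification. No gap; nothing further is needed.
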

\begin{proof}
For $\alpha\in H_1\otimes H_2$ we obtain
\begin{align*}
\sum _j\ket{P_j\alpha}\bra{\phi _j}&=\sum _j\ket{\sum _i\elbows{\phi _j\otimes\psi _i,\alpha}\psi _i}\bra{\phi _j}\\
&=\sum _{i,j}\elbows{\phi _j\otimes\psi _i,\alpha}\ket{\psi _i}\bra{\phi _j}=J_\ascript ^*(\alpha )=T_\bscript ^*(\alpha )\qedhere
\end{align*}
\end{proof}

We now consider the Choi isomorphism \cite{cho75,hz12}. For simplicity, we assume that $H_1=H_2=H$. We also have to assume that
$\dim H<\infty$. Then $\lscript _{\rmhs}(H)=\lscript (H)$ and $T_\bscript\colon\lscript (H)\to H\otimes H$ is defined as before, where
$\bscript =\brac{\phi _1,\phi _2,\ldots ,\phi _n}$ is an orthonormal basis for $H$. Define the vector $\phi _+\in H\otimes H$ by
$\phi _+=\sum\limits _{j=1}^n\phi _j\otimes\phi _j$. This is the point at which we need $\dim H<\infty$. We then write
\begin{align}                
\label{eq21}
T_\bscript (A)&=\sum\phi _j\otimes A\phi _j=\sum (I\otimes A)\phi _j\otimes\phi _j=(I\otimes A)\sum\phi _j\otimes\phi _j\notag\\
   &=(I\otimes A)\phi _+
\end{align}
We extend \eqref{eq21} to a map $C_\bscript\colon\lscript\paren{\lscript (H)}\to\lscript (H\otimes H)$ by defining
\begin{equation}                
\label{eq22}
C_\bscript (B)=(I\otimes B)\ket{\phi _+}\bra{\phi _+}
\end{equation}
If we normalize $\phi _+$ (which is usually done), we can think of \eqref{eq22} as extending \eqref{eq21} from a vector state $\phi _+$ to the corresponding pure state $\ket{\phi _+}\bra{\phi _+}$. To see that $C_\bscript (B)\in\lscript (H\otimes H)$ we have that 
\begin{align*}
C_\bscript (B)&=(I\otimes B)\ket{\sum\phi _i\otimes\phi _i}\bra{\sum\phi _j\otimes\phi _j}
   =(I\otimes B)\sum _{i,j}\ket{\phi _i\otimes\phi _i}\bra{\phi _j\otimes\phi _j}\\
   &=(I\otimes B)\sum _{i,j}\ket{\phi _i}\bra{\phi _j}\otimes\ket{\phi _i}\bra{\phi _j}
   =\sum _{i,j}\paren{\ket{\phi _i}\bra{\phi _j}\otimes B\ket{\phi _i}\bra{\phi _j}}
\end{align*}
It is shown in \cite{cho75,hz12} that $C_\bscript$ is a Hilbert space isomorphism; that is, $C_\bscript$ is a linear bijection that preserves the HS inner product.

Notice that for $B_1,B_2\in\lscript\paren{\lscript (H)}$ we have
\begin{equation*}
C_\bscript (B_1B_2)=(I\otimes B_1B_2)\ket{\phi _+}\bra{\phi _+}
\end{equation*}
which, in general does not equal
\begin{equation*}
C_\bscript (B_1)C_\bscript (B_2)=(I\otimes B_1)\ket{\phi _+}\bra{\phi _+}(I\otimes B_2)\ket{\phi _+}\bra{\phi _+}
\end{equation*}
so $C_\bscript$ does not preserve products. In fact,
\begin{equation*}
C_\bscript (I)=\ket{\phi _+}\bra{\phi _+}\ne I
\end{equation*}
so we have
\begin{equation*}
C_\bscript (IB_2)=C_\bscript (B_2)\ne C_\bscript (I_1)C_\bscript (B_2)=\ket{\phi _+}\bra{\phi _+}(I\otimes B_2)\ket{\phi _+}\bra{\phi _+}
\end{equation*}
in general. Since $R_\ascript (I)=I$, this shows that $C_\bscript\ne R_\ascript$ so the two isomorphisms are different. The main advantage of
$C_\bscript$ over $R_\ascript$ is that $C_\bscript$ takes completely positive elements of $\lscript\paren{\lscript (H)}$ to positive operators in
$\lscript (H\otimes H)$.

\section{Discussion}  
Why is the isomorphism $R\colon\lscript\paren{\lscript _{\rmhs}(H_1,H_2)}\to\lscript (H_1\otimes H_2)$ important? One reason is that in the particular case when $H_1=H_2=H$, then the positive elements $\rho\in\lscript _{\rmhs}(H)$ with $\rmtr (\rho )=1$ represent quantum states. In particular, dynamics are given by one-parameter groups $U_t\in\lscript\paren{\lscript _{\rmhs}(H)}$. Also, quantum operations and quantum channels \cite{hz12,nc00} are represented by completely positive elements in $\lscript\paren{\lscript _{\rmhs}(H_1,H_2)}$. For such a map
$B$, $R(B)$ becomes an operator in $\lscript (H_1\otimes H_2)$. Now we can employ the well-developed theory of bounded operators on a Hilbert space to study properties of $B$. In particular, the concatenation of two channels can become quite complicated but since
$R(B_1B_2)=R(B_1)R(B_2)$ this becomes a simple product of operators. For example a quantum channel
$B\in\lscript\paren{\lscript _{\rmhs}(H)}$ has a \textit{Kraus decomposition} $B(A)=\sum M_i^*AM_i$, $A\in\lscript _{\rmhs}(H)$, $M_i\in\lscript (H)$ with $\sum M_iM_i^*=I$. If we concatenate $B$ with a channel $C$ where $C(A)=\sum N_i^*AN_i$, then
\begin{equation*}
CB(A)=\sum _{i,j}N_j^*M_i^*AM_iN_j
\end{equation*}
which can be complicated to analyze.

In more detail, if $B(A)=\sum M_i^*AM_i$ and $\ascript =\brac{\phi _i}$ is an orthonormal basis for $H$, then for all $\alpha\in H\otimes H$ we that
\begin{align}                
\label{eq31}
R(B)\alpha&=J_\ascript BJ_\ascript ^*\alpha =J_\ascript\sum _iM_i^*J_\ascript ^*(\alpha )M_i\notag\\
  &=\sum _{r,s}\elbows{\phi _r,\sum _iM_i^*J_\ascript ^*(\alpha )M_i\phi _s}\phi _s\otimes\phi _r\notag\\
  &=\sum _s\phi _s\otimes\sum _iM_i^*J_\ascript ^*(\alpha )M_i\phi _s
\end{align}
Letting $M_\alpha\in\lscript (H)$ be the operator
\begin{align*}
M_\alpha&=\sum _iM_i^*J_\ascript ^*(\alpha )M_i=\sum _iM_i^*\sum _{r,s}\elbows{\phi _r\otimes\phi _s,\alpha}\ket{\phi _s}\bra{\phi _r}M\\
   &=\sum _{r,s}\elbows{\phi _r\otimes\phi _s,\alpha}\sum _i\ket{M_i^*\phi _s}\bra{M_i^*\phi _r}
\end{align*}
Equation \eqref{eq31} becomes
\begin{equation}                
\label{eq32}
R(B)\alpha =\sum _s\phi _s\otimes M_\alpha\phi _s
\end{equation}
If $\dim H<\infty$, and $\phi _+=\sum _s\phi _s\otimes\phi _s$, we can write \eqref{eq32} as
\begin{equation*}
R(B)\alpha =\sum _s(I\otimes M_\alpha )\phi _s\otimes\phi _s=(I\otimes M_\alpha )\phi _+
\end{equation*}
which is similar to \eqref{eq21}.

We can turn things around and consider the isomorphism $S\colon\lscript (H_1\otimes H_2)\to\lscript\paren{\lscript _{\rmhs}(H_1,H_2)}$. In this way we can study entanglement from a different point of view. For example, if $\alpha\in H_1\otimes H_2$ is a factorized (or product) vector state, then $\alpha =\phi\otimes\psi$, $\phi\in H_1$, $\psi\in H_2$ where $\doubleab{\phi}=\doubleab{\psi}=1$. It is convenient to form bases
$\ascript =\brac{\phi _i,\psi _i}$ with $\phi _1=\phi$, $\psi _1=\psi$ so that $J_\ascript ^*(\alpha )=\ket{\psi}\bra{\phi}$. If
$\dim H_1=\dim H_2=n<\infty$, then any vector state $\alpha$ has a \textit{Schmidt decomposition} $\alpha =\sum\lambda _i\phi _i\otimes\psi _i$, where $\lambda _i\ge 0$ and $\phi _i$, $\psi _i$ are orthonormal basis for $H_1$, $H_2$, respectively \cite{hz12,nc00}. We then have that
$\alpha$ is entangled if and only if at least two of the $\lambda _i$ are positive. Again, letting $\ascript =\brac{\phi _i,\psi _j}$ we have that
\begin{equation*}
J_\ascript ^*(\alpha )=J_\ascript ^*\paren{\sum\lambda _i\phi _i\otimes\psi _i}=\sum\lambda _i\ket{\psi _i}\bra{\phi _i}
\end{equation*}
Corresponding to the factorized vector state $\alpha =\phi\otimes\psi$ we have the factorized pure state
\begin{equation*}
P_\alpha\ket{\alpha}\bra{\alpha}=\ket{\phi\otimes\psi}\bra{\phi\otimes\psi}=\ket{\phi}\bra{\phi}\otimes\ket{\psi}\bra{\psi}=P_\phi\otimes P_\psi
\end{equation*}
It is easy to check that
\begin{equation*}
S(P_\alpha )=S(P_\phi\otimes P_\psi )=P_{\ket{\psi}\bra{\phi}}
\end{equation*}
More generally, for a factorized operator $A\otimes B\in\lscript (H_1\otimes H_2)$, $A\in\lscript (H_1)$, $B\in\lscript (H_2)$ we have that
\begin{align*}
S(A\otimes B)\ket{\psi}\bra{\phi}&=J_\ascript ^*(A\otimes B)J\ket{\psi}\bra{\phi}=J_\ascript ^*(A\otimes B)\phi\otimes\psi\\
   &=J_\ascript ^*(A\phi\otimes B\psi )=\ket{B\psi}\bra{A\phi}
   \end{align*}
From this we can compute $S(A\otimes B)C$ for any $C\in\lscript _{\rmhs}(H_1,H_2)$ because
\begin{equation*}
C=\sum\elbows{\ket{\psi _i}\bra{\phi _j},C\ket{\psi _i}\bra{\phi _j}}_{\rmhs}\ket{\psi _i}\bra{\phi _j}
\end{equation*}
We leave a deeper analysis for later studies


\begin{thebibliography}{99}
\bibitem{cho75}M.~Choi, Completely positive maps on complex matrices, \textit{Linear.~Alg.~Appl.} \textbf{10},285--290 (1975).
\bibitem{hz12}T.~Heinosaari and M.~Ziman, The Mathematical Language of Quantum Theory, Cambridge University Press, Cambridge, 2012.
\bibitem{nc00}M.~Nielson and I.~Chuang, Quantum Computation and Quantum Information, Cambridge University Press, Cambridge, 2000.

\end{thebibliography}
\end{document}